\newlength{\defbaselineskip}
\newcommand{\setlinespacing}[1]%
           {\setlength{\baselineskip}{#1 \defbaselineskip}}
\numberwithin{equation}{section}
\newtheorem{thm}{Theorem}[section]
\newtheorem{lem}[thm]{Lemma}
\newtheorem{prop}[thm]{Proposition}
\theoremstyle{definition}
\theoremstyle{remark}
\newtheorem{rem}[thm]{Remark}
\numberwithin{equation}{section}
\begin{document}
\title[Strichartz estimates in Wiener amalgam spaces]{Strichartz estimates for the Dirac flow in Wiener amalgam spaces}

\author{Seongyeon Kim, Hyeongjin Lee and Ihyeok Seo}

\thanks{This work was supported by a KIAS Individual Grant (MG082901) at Korea Institute for Advanced Study and the POSCO Science Fellowship of POSCO TJ Park Foundation (S. Kim), and by NRF-2022R1A2C1011312 (I. Seo).}

\subjclass[2020]{Primary: 35B45; Secondary: 35Q40, 42B35}
\keywords{Strichartz estimates, Dirac equation, Wiener amalgam spaces}

\address{School of Mathematics, Korea Institute for Advanced Study, Seoul 02455, Republic of Korea}
\email{synkim@kias.re.kr}

\address{Department of Mathematics, Sungkyunkwan University, Suwon 16419, Republic of Korea}
\email{hjinlee@skku.edu}
\email{ihseo@skku.edu}

\begin{abstract}
In this paper we obtain some new Strichartz estimates for the Dirac flow in the context of Wiener amalgam spaces which control the local regularity of a function and its decay at infinity separately unlike $L^p$ spaces. 
While it is well understood recently for some flows such as the Schr\"odinger and wave flows that work in the non-relativistic regime, nothing is known about the Dirac flow which governs a physical system in the case of relativistic fields.
\end{abstract}

\maketitle

\section{Introduction}		
In relativistic quantum mechanics the state of a free electron is represented by 
a wave function  $u(x,t):\mathbb{R}^{3+1} \rightarrow \mathbb{C}^{4}$ governed by the following Dirac equation:
\begin{equation}\label{DE}
\begin{cases}
-i\partial_{t}u+\mathcal{D}u+m\beta u=0,\\
u(x,0)=f(x),
\end{cases}
\end{equation}
where $m>0$ is the mass of the electron and the Dirac operator $\mathcal{D}$ is the first order operator defined as
\begin{equation*}
\mathcal{D} =-i\sum_{j=1}^{3}\alpha_{j}\partial_{j}=-i\alpha\cdot \nabla
\end{equation*}
with the $4\times4$ complex matrices (called Dirac matrices), 
\begin{equation*}
\alpha_{j}=
\begin{pmatrix}
0 & \sigma_{j}\\
\sigma_{j} & 0
\end{pmatrix},
\quad
\beta=
\begin{pmatrix}
I_{2} & 0\\
0 & -I_{2}
\end{pmatrix},
\end{equation*}
that are constructed from the Pauli matrices 
\begin{equation*}
\sigma_{1}=
\begin{pmatrix}
0 & 1\\
1 & 0
\end{pmatrix},
\quad
\sigma_{2}=
\begin{pmatrix}
0 & -i\\
i & 0
\end{pmatrix},
\quad
\sigma_{3}=
\begin{pmatrix}
1 & 0\\
0 & -1
\end{pmatrix}.
\end{equation*}
Here the Dirac matrices satisfy the following relations for $j,k=1,2,3$ 
\begin{equation}\label{matid}
\beta^{2}=\alpha_{j}^{2}=I_{4}, \quad \alpha_{j}\beta+\beta\alpha_{j}=0, \quad \alpha_{j}\alpha_{k}+\alpha_{k}\alpha_{j}=2\delta_{jk}I_{4}.
\end{equation}
See for example \cite{Th} for a more detailed introduction to the Dirac equation.

By scaling $u(\frac{x}{m},\frac{t}{m}) \rightarrow u(x,t)$, we shall always assume $m=1$ and consider the Dirac flow $e^{-it(\mathcal{D}+\beta)}$ that gives the solution 
$u(x,t)=e^{-it(\mathcal{D}+\beta)}f(x)$ to the Cauchy problem \eqref{DE}. 
Then the following space-time integrability (called Strichartz estimates) 
\begin{equation}\label{classtr}
\|e^{-it(\mathcal{D}+\beta)}f\|_{L_t^q L_x^r}
\lesssim
\|f\|_{H^{\sigma}}
\end{equation}
holds with 
\begin{equation}\label{range}
2\leq q\leq\infty,\quad 2\leq r\leq6,\quad
\frac{2}{q}+\frac{3}{r}=\frac{3}{2},
\quad\sigma\geq \frac1q-\frac1r+\frac12. 
\end{equation}
See the Appendix in \cite{DF} for more details.

In this paper we are concerned with obtaining those estimates in Wiener amalgam spaces\footnote{The spaces were first introduced by Feichtinger \cite{F} and have already appeared as a technical tool in the study of partial differential equations (\cite{T}).
See also \cite{S}.} which, unlike the $L^p$ spaces, control the local regularity of a function and its decay at infinity separately.
This separability makes it possible to perform a finer analysis of the local and global behavior of the flow.
These aspects were originally pointed out in the several works by Cordero and Nicola 
 (\cite{CN, CN2, CN3}) in the context of the Schr\"{o}dinger flow $e^{it\Delta}$, 
 and the works were generalized in \cite{KKS} to initial data with regularity. 
 See also \cite{KKS2} for the wave flow $e^{it\sqrt{-\Delta}}$.

As such, the results known so far are for the flows that work in the non-relativistic
regime. In contrast, we aim
here to study the Dirac flow which governs the physical system in the case of relativistic fields.
Since the operator $\mathcal{D}+\beta=-i\alpha\cdot \nabla+\beta$ mixes the components of $u$ when it acts on, it is harder to analyze the Dirac flow directly.
To overcome this difficulty, we first reduce the matter for $e^{-it(\mathcal{D}+\beta)}$ to the level of $e^{{\pm} it\sqrt{1-\Delta}}$ by using some projections onto the two-dimensional eigenspaces of the operator $\mathcal{D}+\beta$ whose symbol is $\alpha \cdot \xi + \beta$.
Indeed we note that the eigenvalues of $\alpha \cdot \xi + \beta$ are $\pm\langle\xi\rangle$ since $(\alpha\cdot\xi+\beta)^2=\langle\xi\rangle^2I$ from \eqref{matid}. (See Section \ref{sec2} for details.) 
Then we consider the problem of obtaining a pointwise estimate for the integral kernel of the Fourier multiplier $\sqrt{1-\Delta}^{-\sigma} e^{{\pm} it\sqrt{1-\Delta}}$ and of carefully estimating time-decay estimates on Wiener amalgam spaces based on the pointwise estimates.

The procedure in this paper also works for the massless case $m=0$ with the wave propagator rather than the Klein-Gordon propagator. This is because the spinorial structure of Dirac propagator is not harmful any more for obtaining the decay estimates by virtue of the projections. It is worth noticing that some relevant decay estimates were already known in \cite{KKS2} for the wave propagator. This  would yield the results with some modification for the massless case. So we shall not pursue this case here.

Before stating our results, we recall the definition of Wiener amalgam spaces.
Let $\varphi \in C_0^{\infty}$ be a test function satisfying $\|\varphi\|_{L^2} =1$.
Let $1\le p,q \le \infty$.
Then the Wiener amalgam space $W(p,q)$
is defined as the space of functions $f \in L_{\textrm{loc}}^p$
equipped with the norm
\begin{equation*}
\|f\|_{W(p,q)}= \big\|\, \|f \tau_x \varphi\|_{L^p} \big\|_{L_x^q},
\end{equation*}
where $\tau_x \varphi(\cdot) = \varphi(\cdot-x)$.
Here different choices of $\varphi$ generate the same space and yield equivalent norms.
This space can be seen as a natural extension of $L^p$ space in view of $W(p,p)=L^p$,
and $W(A,B)$ for Banach spaces $A$ and $B$ is also defined in the same way.
We also list some basic properties of these spaces which will be frequently used in the sequel:
\begin{itemize}
\item
\textit{Inclusion}; if\, $p_0 \ge p_1$ and $q_0 \le q_1$,
\begin{equation} \label{inclusion}
W(p_0, q_0) \subset W(p_1, q_1).
\end{equation}
\item
\textit{Convolution}\footnote{More generally, if\, $A_0 * A_1 \subset A$ and $B_0 * B_1 \subset B$,
$W(A_0 , B_0) * W(A_1 , B_1) \subset W(A, B).$}; if $1/p + 1 = 1/p_0 + 1/p_1$ and $1/q + 1 = 1/q_0 + 1/q_1$,
\begin{equation}\label{y-ineq}
W(p_0, q_0) * W(p_1, q_1) \subset W(p, q).
\end{equation}
\item
\textit{Interpolation}\footnote{For $0<\theta<1$, $(\cdot\,,\cdot)_{[\theta]}$ denotes the complex interpolation functor and $(1/p_\theta,1/q_\theta)$ is usually given as
$1/p_\theta = \theta/p_0 + (1-\theta)/p_1$ and  $1/q_\theta = \theta/q_0 + (1-\theta)/q_1$.};
if\, $q_0 < \infty$ or $q_1 < \infty$,
\begin{equation} \label{inter}
(W(p_0, q_0), W(p_1, q_1))_{[\theta]} = W(p_\theta, q_\theta).
\end{equation}
\end{itemize}
We refer to \cite{F,F2,F3,H} for details.

Our main results are now stated as follows.

\begin{thm}\label{thm1}
Let $\sigma>1$. Assume that $2\le \widetilde q\le\infty$, $2<q<\infty$ and $6<r\le\widetilde{r}<\infty$.
Then we have
\begin{equation}\label{T}
\|e^{-it(\mathcal{D}+\beta)}f\|_{W({\widetilde q} ,q)_t W({\widetilde r} ,r)_x} \lesssim \|f\|_{H^{\sigma}}
\end{equation}
if
\begin{equation*}
\frac{1}{\widetilde q}+\frac{3}{\widetilde r}>\frac{3}{2}-\sigma  \quad \text{and} \quad \frac{1}{q}+\frac{3}{r}=\frac{1}{2}.
\end{equation*}
\end{thm}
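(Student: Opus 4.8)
The plan is to remove the matrix (spinorial) structure first and then reduce everything to a pointwise bound on the kernel of the scalar Klein--Gordon propagator. Write $\langle D\rangle=\sqrt{1-\Delta}$ for the Fourier multiplier with symbol $\langle\xi\rangle$, and introduce the projection multipliers with symbols
\begin{equation*}
\Pi_\pm(\xi)=\frac12\Big(I_4\pm\frac{\alpha\cdot\xi+\beta}{\langle\xi\rangle}\Big).
\end{equation*}
Since $(\alpha\cdot\xi+\beta)^2=\langle\xi\rangle^2I_4$ by \eqref{matid}, these are complementary projections onto the $\pm\langle\xi\rangle$--eigenspaces of $\alpha\cdot\xi+\beta$, so that $\alpha\cdot\xi+\beta=\langle\xi\rangle(\Pi_+-\Pi_-)$ and hence
\begin{equation*}
e^{-it(\mathcal D+\beta)}=e^{-it\langle D\rangle}\Pi_+(D)+e^{it\langle D\rangle}\Pi_-(D).
\end{equation*}
The entries of $\Pi_\pm(\xi)$ are smooth symbols of order zero, hence bounded Fourier multipliers on the Wiener amalgam spaces in \eqref{T}, and they commute with $\langle D\rangle^\sigma$; thus it suffices to prove \eqref{T} with the scalar half--waves $e^{\pm it\langle D\rangle}$ in place of $e^{-it(\mathcal D+\beta)}$. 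Setting $g=\langle D\rangle^\sigma f$ so that $\|f\|_{H^\sigma}=\|g\|_{L^2}$, the task becomes to bound $\|\langle D\rangle^{-\sigma}e^{\pm it\langle D\rangle}g\|_{W(\widetilde q,q)_tW(\widetilde r,r)_x}$ by $\|g\|_{L^2}$, where the operator has convolution kernel
\begin{equation*}
K_t(x)=(2\pi)^{-3}\int_{\mathbb R^3}e^{i(x\cdot\xi\pm t\langle\xi\rangle)}\langle\xi\rangle^{-\sigma}\,d\xi.
\end{equation*}

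The crux, and the step I expect to be the main obstacle, is a pointwise estimate for $K_t$. I would decompose the frequency variable dyadically and analyze the oscillatory integral with phase $\Phi(\xi)=x\cdot\xi\pm t\langle\xi\rangle$ by stationary and non--stationary phase. Because $\nabla_\xi\Phi=x\pm t\,\xi/\langle\xi\rangle$ and $|\xi/\langle\xi\rangle|<1$, there is no critical point once $|x|\gtrsim|t|$, and repeated integration by parts yields rapid decay of $K_t$ off the light cone. Inside the cone the Hessian $t\,\nabla^2\langle\xi\rangle$ is degenerate---its radial and angular eigenvalues scale like $\langle\xi\rangle^{-3}$ and $\langle\xi\rangle^{-1}$---which is precisely the Klein--Gordon loss--of--derivatives effect and makes the behavior near $|x|=|t|$ delicate (an Airy--type regime). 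Carrying out the stationary phase should give, schematically, $|K_t(x)|\lesssim|t|^{-3/2}\langle\xi_\ast\rangle^{\,5/2-\sigma}$ inside the cone, where $\xi_\ast=\xi_\ast(x,t)$ is the critical frequency determined by $|x|/|t|=|\xi_\ast|/\langle\xi_\ast\rangle$, together with the smoothing factor $\langle\xi\rangle^{-\sigma}$ that governs the summation over dyadic scales and fixes the admissible range of $\sigma$.

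With the pointwise bound in hand I would convert it into a fixed--time \emph{dispersive} estimate in Wiener amalgam spaces. Writing the propagator as convolution against $K_t$ and invoking the convolution inclusion \eqref{y-ineq}, I expect a bound of the form $\|\langle D\rangle^{-\sigma}e^{\pm it\langle D\rangle}h\|_{W(\widetilde r,r)_x}\lesssim\omega(t)\,\|h\|_{W(\widetilde r',r')_x}$, in which $\omega(t)$ carries the $|t|$--decay coming from the dispersive factor $|t|^{-3/2}$ and the near--cone weight, while the finiteness of the relevant local norm of $K_t$---controlled by the Bessel--type singularity of $\langle D\rangle^{-\sigma}$ near the origin---is exactly what imposes the strict inequality $\tfrac1{\widetilde q}+\tfrac3{\widetilde r}>\tfrac32-\sigma$ (consistent with the Sobolev threshold $\sigma>1$ needed for $H^\sigma\hookrightarrow L^r$ when $r>6$). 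Here the separation of local and global exponents peculiar to Wiener amalgam spaces is essential, since the corresponding fixed--time bound with $L^2$ data alone carries no decay by unitarity.

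Finally I would pass from the fixed--time dispersive estimate to the space--time bound by a duality ($TT^\ast$) argument combined with a Hardy--Littlewood--Sobolev--type integration in $t$ performed within the Wiener amalgam time norm $W(\widetilde q,q)_t$. Matching the dispersive decay rate $\omega(t)$ against the time integrability is what produces the scaling identity $\tfrac1q+\tfrac3r=\tfrac12$ on the global exponents, while the open conditions $2<q<\infty$ and $6<r\le\widetilde r<\infty$ (together with $2\le\widetilde q\le\infty$) are reached by interpolating via \eqref{inter} and enlarging via the inclusions \eqref{inclusion}. The heart of the argument remains the kernel estimate of the second step---especially the degenerate stationary phase near the light cone---after which the Wiener amalgam Young inequality, the interpolation, and the time integration are comparatively routine.
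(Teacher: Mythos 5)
Your skeleton matches the paper's: project with $\Pi_\pm(D)$ onto the eigenspaces of $\alpha\cdot\xi+\beta$ to reduce to the half Klein--Gordon flows $e^{\pm it\langle D\rangle}$, estimate the kernel of a smoothed propagator by dyadic decomposition, convert that into a fixed-time bound between Wiener amalgam spaces, and close with $TT^\ast$ plus a Hardy--Littlewood--Sobolev integration inside $W(\widetilde q,q)_t$. But there is a genuine gap in the middle: you attach the smoothing $\langle D\rangle^{-\sigma}$ to the propagator \emph{before} dualizing, and propose the fixed-time estimate $\|\langle D\rangle^{-\sigma}e^{\pm it\langle D\rangle}h\|_{W(\widetilde r,r)_x}\lesssim\omega(t)\|h\|_{W(\widetilde r',r')_x}$. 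This estimate is false when $\sigma$ is close to $1$: at high frequencies $\langle\xi\rangle\approx|\xi|$, so the kernel $K_t$ with weight $\langle\xi\rangle^{-\sigma}$ carries a wave-type singularity of size $\bigl||x|-|t|\bigr|^{\sigma-2}$ on the light cone whenever $\sigma<2$ (equivalently, the dyadic pieces at frequency $2^k$ near the cone are of size $2^{k(2-\sigma)}$, whose sum diverges for $\sigma\le2$). Hence $K_t$ fails to lie in $W(\frac{\widetilde r}{2},\frac{r}{2})_x$ even locally, and by the sharp fixed-time theory for wave/Klein--Gordon propagators (local $L^{p'}\to L^{p}$ boundedness requires smoothing of order $(n+1)(\frac12-\frac1p)=2-\frac{4}{\widetilde r}$ here), no choice of $\omega(t)$ can rescue your estimate in the nonempty range $1<\sigma<2-\frac{4}{\widetilde r}$, since $\widetilde r>6$. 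Moreover, even where your estimate does hold, it is not the one the duality argument consumes: $TT^\ast$ applied to $T=e^{it\langle D\rangle}\langle D\rangle^{-\sigma}$ produces $\int\langle D\rangle^{-2\sigma}e^{i(t-s)\langle D\rangle}F(\cdot,s)\,ds$, whose kernel carries $\langle\xi\rangle^{-2\sigma}$, not $\langle\xi\rangle^{-\sigma}$.

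The repair is to reverse the order of your last two steps, which is exactly what the paper does: run $TT^\ast$ first, so that all kernel analysis is performed on $K_{2\sigma}$ with $\gamma=2\sigma>2$. Above this threshold the cone singularity is gone, and one does not need the delicate stationary-phase/Airy analysis you single out as the main obstacle: summing the localized estimates of Bejenaru--Herr gives the crude uniform bounds $|K_{2\sigma}(x,t)|\lesssim\max\{1,|(x,t)|^{2\sigma-3}\}$ for $|(x,t)|\lesssim1$ and $|K_{2\sigma}(x,t)|\lesssim|(x,t)|^{-1}$ for $|(x,t)|\gtrsim1$, with no $|t|^{-3/2}$ dispersive factor at all. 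These already suffice, because the spatial amalgam integration itself produces the time decay: $\|K_{2\sigma}(\cdot,t)\|_{W(\frac{\widetilde r}{2},\frac{r}{2})_x}\lesssim|t|^{-1+6/r}$ for $|t|\ge1$ and $\lesssim\max\{1,|t|^{-3+2\sigma+6/\widetilde r}\}$ for $|t|\le1$. The first bound, matched against the weak-type Young/HLS step in $t$, forces $\frac1q+\frac3r=\frac12$ with $2<q<\infty$ and $r>6$; the second yields the strict condition $\frac1{\widetilde q}+\frac3{\widetilde r}>\frac32-\sigma$ (your Bessel-singularity heuristic, but at smoothing level $2\sigma$). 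Note also that interpolation and inclusion are not how the stated ranges are reached; in the paper interpolation enters only to treat the single exceptional exponent $\widetilde r=6/(3-2\sigma)$ excluded by the kernel lemma.
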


\begin{rem}
Roughly speaking, the estimate \eqref{T} shows that the $W({\widetilde r}, r)_x$-norm of the flow has a $L_t^q$-decay at infinity.
Since the $L_x^r$ norm in the classical type \eqref{classtr}  is rougher than the $W(\widetilde r,r)_x$-norm when $r\le\widetilde r$ (see \eqref{inclusion}), 
it is worth trying to obtain \eqref{T} especially when $r < \widetilde{r}$.
It is much more delicate. 
\end{rem}

Particularly when $\widetilde{q}=q$ and $\widetilde{r}=r$, \eqref{T} becomes equivalent to \eqref{classtr} with 
\begin{equation}\label{range2}
	\quad 2<q<\infty,\quad 6<r<\infty,\quad
	\frac{1}{q}+\frac{3}{r}=\frac{1}{2}, \quad \sigma>1.
\end{equation}
Since $r>6$, this provides some new regions of $(1/q,1/r,\sigma)$ for which \eqref{classtr} holds; the region of $(1/q, 1/r, \sigma)$ 
for \eqref{range} corresponds to a plane formed by sliding the segment $CD$ in the $\sigma$-axis direction in Figure \ref{figure},
while the corresponding region for \eqref{range2} is a plane formed by sliding the segment $AB$ in the same direction. Notice that these two planes do not obviously intersect each other. 

From complex interpolation (see \eqref{inter}) between \eqref{T} and the classical estimates \eqref{classtr}, one can obtain further estimates. We omit the details.  
One can also trivially increase $q,r$ and diminish $\widetilde q, \widetilde r$ in \eqref{T} using the inclusion relation \eqref{inclusion}.
Finally we would like to refer the reader to \cite{KN,Tr} for some (different) estimates for the Dirac equation in modulation and Wiener amalgam spaces.


\begin{figure}\label{figure}
\begin{center}
\begin{tikzpicture}[scale=5,rotate around y=-45]
\coordinate (A) at (1,0,1/6); 
\coordinate (B) at (1,1/2,0);
\coordinate (C) at (0,0,1/2);
\coordinate (D) at (5/6,1/2,1/6);

\fill[gray!50] (A) -- (B) -- (1.37,1/2,0) -- (1.37,0,1/6); 
\fill[gray!20] (C) -- (D) -- (1.2,1/2,1/6) -- (1.2,0,1/2); 

\draw[->] (0,0,0) -- (1.25,0,0); 
\draw[->] (0,0,0) -- (0,.6,0); 
\draw[->] (0,0,0) -- (0,0,.6); 
\coordinate (x) at (1.25,0,0); 
\coordinate (y) at (0,.6,0); 
\coordinate (z) at (0,0,.6); 
\node [below] at (x) {$\sigma$}; 
\node [left] at (y) {$\frac1q$}; 
\node [left] at (z) {$\frac1r$}; 

\node [below] at (0,0,1/2) {$\frac{1}{2}$}; 
\node [right] at (0,1/2+0.019,0) {$\frac{1}{2}$}; 
\node [right] at (1-0.03,0.02,0) {$1$};
\node [above] at (5/6,-0.01,0) {$\frac56$}; 
\node [below] at (-0.009,0,1/6){$\frac16$}; 

\draw (0,0,1/2) -- (5/6,1/2,1/6); 
\draw (1,0,1/6) -- (1,1/2,0); 
\draw[dotted] (5/6,0,1/2) -- (5/6,0,0); 
\draw[dotted] (1,0,1/6) -- (0,0,1/6); 
\draw[dotted] (5/6,0,1/6) -- (5/6,1/2,1/6); 
\draw[dotted] (5/6,0,1/2) -- (5/6,1/2,1/2); 
\draw[dotted] (5/6,1/2,1/6) -- (5/6,1/2,1/2); 
\draw[dotted] (5/6,1/2,1/2) -- (0,1/2,1/2); 
\draw[dotted] (0,1/2,1/2) -- (0,0,1/2); 
\draw[dotted] (1,1/2,1/6) -- (0,1/2,1/6); 
\draw[dotted] (0,1/2,1/2) -- (0,1/2,1/6); 

\draw[dotted] (0,1/2,1/6) -- (0,0,1/6); 
\draw[dotted] (0,1/2,0) -- (0,1/2,1/6); 

\draw[dotted] (1,0,0) -- (1,1/2,0); 
\draw[dotted] (1,0,0) -- (1,0,1/6); 
\draw[dotted] (1,0,1/6) -- (1,1/2,1/6); 
\draw[dotted] (1,1/2,0) -- (1,1/2,1/6); 
\draw[dotted] (0,1/2,0) -- (1,1/2,0); 

\draw (C) -- (1.2,0,1/2); 
\draw (D) -- (1.2,1/2,1/6); 
\draw (A) -- (1.37,0,1/6);  
\draw (B) -- (1.37,1/2,0);

\node [above] at (0,0,1/2+0.028) {$C$};
\node [above] at (5/6+0.04,1/2-0.008,1/6-0.015) {$D$};
\node [below] at (1-0.04,0,1/6+0.008) {$A$};
\node [above] at (1+0.04,1/2-0.008,0-0.015) {$B$};

\end{tikzpicture}
\end{center}
\caption{The ranges of $(1/q,1/r,\sigma)$ for \eqref{range} and \eqref{range2}}
\end{figure}
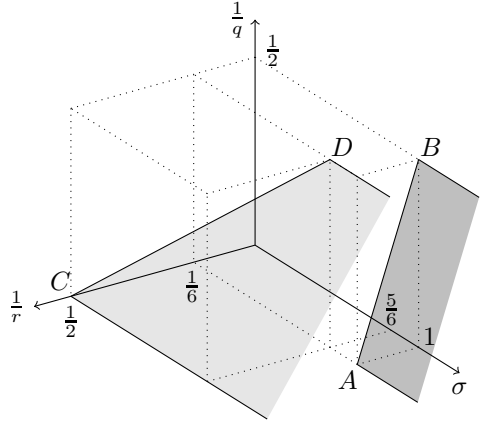


\

The rest of the paper is organized as follows.
In Section \ref{sec2} we prove Theorem \ref{thm1} using Proposition \ref{prop1} that contains the corresponding space-time estimates for the flows $e^{\pm it\sqrt{1-\Delta}}$. This proposition is proved in the same section by assuming Lemma \ref{fix} which provides time-decay estimates for the integral kernel of the Fourier multiplier 
$\sqrt{1-\Delta}^{-\sigma} e^{\pm it\sqrt{1-\Delta}}$. To obtain such estimates in Sections \ref{sec3} and \ref{sec4}, we split the kernel by dyadic (Littlewood-Paley) decomposition and use its pointwise estimates.

Throughout this paper, we use the notation $\langle \cdot \rangle = \sqrt {1+|\cdot|^2}$.
The letter $C$ stands for a positive constant which may be different
at each occurrence.
We also denote $A\lesssim B$ and $A\sim B$ to mean $A\leq CB$ and $CB \leq A \leq CB$, respectively, with unspecified constants $C>0$.

\section{Reformulation of the Dirac flow} \label{sec2}
In this section we prove Theorem \ref{thm1}. 
Let us first define the projection operators $\Pi_{\pm}(D)$ by 
\begin{equation*}
\Pi_{\pm}(D)=\frac{1}{2}\Big(I\pm\frac{1}{\langle D \rangle}(-i\alpha\cdot\nabla+\beta)\Big), \quad D=-i\nabla,
\end{equation*}
which was introduced in \cite{BH}.
By \eqref{matid} we then see that
$$-i\alpha\cdot \nabla+\beta= \langle D \rangle (\Pi_+(D)-\Pi_-(D))$$
and
\begin{equation}\label{orth}
\Pi_{\pm}(D)\Pi_{\pm}(D)=\Pi_{\pm}(D), \quad \Pi_{\pm}(D)\Pi_{\mp}(D)=0.
\end{equation}
Denoting $u_{\pm}=\Pi_{\pm}(D)u$ and 
applying $\Pi_{\pm}(D)$ to \eqref{DE}, we therefore obtain the following system of equations
\begin{equation*}
\begin{cases}
-i\partial_t u_{\pm} \pm \langle D \rangle u_{\pm}=0,\\
u_{\pm}(x,0)=\Pi_{\pm}(D)f= f_{\pm}
\end{cases}
\end{equation*}
whose solutions are 
\begin{equation*}
u_{\pm}=e^{\mp it \langle D\rangle}f_{\pm}=\frac{1}{(2\pi)^3}\int_{\mathbb{R}^{3}}e^{i(x\cdot\xi\mp t\langle\xi\rangle)}\widehat{f_{\pm}}(\xi)d\xi.
\end{equation*}
By this reformulation process, the desired estimate \eqref{T} follows immediately from the following proposition using $\|f\|_{H^{\sigma}} \sim \| f_{+}\|_{H^{\sigma}} + \|f_{-}\|_{H^{\sigma}}$ by \eqref{orth} and $u=u_++u_-$.

\begin{prop}\label{prop1}
Let $\sigma>1$. Assume that $2\le \widetilde q \le \infty$, $2<q<\infty$ and $6<r\le\widetilde{r}<\infty$.
Then we have
\begin{equation*}
\|e^{\mp it\langle D \rangle} f\|_{W({\widetilde q} ,q)_t W({\widetilde r} ,r)_x} \lesssim \|f\|_{H^{\sigma}}
\end{equation*}
if
\begin{equation}\label{c1}
\frac{1}{\widetilde q}+\frac{3}{\widetilde r}>\frac{3}{2}-\sigma \quad \text{and} \quad \frac{1}{q}+\frac{3}{r}=\frac{1}{2}.
\end{equation}
\end{prop}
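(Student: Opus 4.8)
The plan is to run a $TT^\ast$ (duality) argument and reduce the whole estimate to the space-time size of the integral kernel furnished by Lemma \ref{fix}. Since $\|f\|_{H^\sigma}=\|\langle D\rangle^\sigma f\|_{L^2}$, setting $g=\langle D\rangle^\sigma f$ makes the claim equivalent to $\|Tg\|_{Y}\lesssim\|g\|_{L^2}$ for the operator $Tg:=\langle D\rangle^{-\sigma}e^{\mp it\langle D\rangle}g$, where $Y:=W(\widetilde q,q)_tW(\widetilde r,r)_x$. One is tempted to write $Tg=K_t\ast g$, with $K_t$ the kernel of $\langle D\rangle^{-\sigma}e^{\mp it\langle D\rangle}$, and to apply the convolution relation \eqref{y-ineq} with $g\in L^2=W(2,2)$ directly; but this forces the global spatial exponent of $K_t$ below $2$, and already the Schr\"odinger-like low-frequency part of $K_t$ then has a $W$-norm that \emph{grows} in $t$. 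Thus the direct route cannot yield a finite $W(\widetilde q,q)_t$-norm, and the time integration must be performed dispersively.

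By the $TT^\ast$ lemma it suffices to prove $\|TT^\ast F\|_{Y}\lesssim\|F\|_{Y'}$ with $Y'=W(\widetilde q',q')_tW(\widetilde r',r')_x$. Here
\begin{equation*}
TT^\ast F(t)=\int_{\mathbb R}\widetilde K_{t-s}\ast F(s,\cdot)\,ds ,
\end{equation*}
where $\widetilde K_\tau$ is the kernel of $\langle D\rangle^{-2\sigma}e^{\mp i\tau\langle D\rangle}$; note the regularity has doubled to $2\sigma$. Applying \eqref{y-ineq} first in $x$ (against $F(s,\cdot)\in W(\widetilde r',r')_x$) and then in $t$, and solving the resulting Young relations — for instance $1/\widetilde r+1=1/c+1/\widetilde r'$ yields $c=\widetilde r/2$, and the same computation gives $r/2$, $\widetilde q/2$, $q/2$ for the other three indices — reduces everything to the single scalar bound
\begin{equation*}
\Big\|\,\big\|\widetilde K_\tau\big\|_{W(\widetilde r/2,\,r/2)_x}\,\Big\|_{W(\widetilde q/2,\,q/2)_\tau}<\infty .
\end{equation*}

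This is precisely Lemma \ref{fix}, applied with regularity $2\sigma$ and with the indices $(\widetilde r/2,r/2)$ in space and $(\widetilde q/2,q/2)$ in time, and the two hypotheses in \eqref{c1} are exactly what make it hold. (The standing assumptions $r,\widetilde r>6$ and $q>2$ are what keep these halved exponents admissible, namely above $3$ in space and above $1$ in time.) In the halved indices, the inequality $\frac1{\widetilde q}+\frac3{\widetilde r}>\frac32-\sigma$ reads $\frac1{\widetilde q/2}+\frac3{\widetilde r/2}>3-2\sigma$: this is the \emph{local} condition, controlling the integrability of $\widetilde K_\tau$ for small $\tau$ and, after a dyadic (Littlewood--Paley) decomposition, the convergence of the sum over frequency scales — where the hypothesis $\sigma>1$ is spent. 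The equality $\frac1q+\frac3r=\frac12$ reads $\frac1{q/2}+\frac3{r/2}=1$: this is the \emph{critical} scaling placing the large-$\tau$ decay of $\|\widetilde K_\tau\|_{W(\widetilde r/2,r/2)_x}$ exactly at the threshold for $\ell^{q/2}$-summability, the borderline being closed by the finer local/global splitting of the Wiener amalgam norm rather than by $L^q_t$ alone.

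The main obstacle is therefore Lemma \ref{fix} itself. One must bound the spatial Wiener amalgam norm of each dyadic piece of $\widetilde K_\tau$ and track its decay in $\tau$: the low frequencies disperse like $|\tau|^{-3/2}$ (Schr\"odinger-type), whereas the high frequencies are wave-like and call for stationary phase using the curvature of the symbol $\langle\xi\rangle$; one then matches the two regimes to the local and critical conditions above and sums the dyadic contributions. A minor further point is the duality at the endpoint $\widetilde q=\infty$, which is handled by invoking Lemma \ref{fix} directly, or by an approximation argument, rather than by passing to the dual space.
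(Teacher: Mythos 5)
Your overall $TT^*$ skeleton, the doubling of the regularity to $2\sigma$, and the first application of the convolution relation \eqref{y-ineq} in the $x$-variable (producing the $W(\widetilde r/2,r/2)_x$-norm of the kernel) all match the paper's proof. The gap is in the time variable: you apply \eqref{y-ineq} again in $t$ and reduce everything to the scalar bound $\big\|\|\widetilde K_\tau\|_{W(\widetilde r/2,r/2)_x}\big\|_{W(\widetilde q/2,q/2)_\tau}<\infty$, but this bound is \emph{false} under the critical scaling $\frac1q+\frac3r=\frac12$. Indeed, by Lemma \ref{fix} the large-time decay is $|\tau|^{-1+6/r}=|\tau|^{-2/q}$, so the local piece of $h(\tau):=\|\widetilde K_\tau\|_{W(\widetilde r/2,r/2)_x}$ around an integer $k$ with $|k|\ge 2$ has $L^{\widetilde q/2}$-norm $\sim(|k|-1)^{-2/q}$, and the global $\ell^{q/2}$-sum is then $\sum_k(|k|-1)^{-1}=\infty$: the harmonic series. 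Your parenthetical claim that the borderline is ``closed by the finer local/global splitting of the Wiener amalgam norm'' is exactly where the argument breaks down --- the local/global splitting does not remove this divergence, since the divergence lives entirely in the global ($\ell^{q/2}$) component.

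The paper closes this borderline differently: it replaces the strong global component $L^{q/2}$ by the weak Lorentz space $L^{q/2,\infty}$, and correspondingly replaces Young's inequality in time by the one-dimensional Hardy--Littlewood--Sobolev fractional integration theorem \eqref{hls}, $L^{1/\alpha,\infty}\ast L^{p}\hookrightarrow L^{q}$ with $\alpha=2/q$ and $p=q'$. Combined with Young locally, this yields the convolution relation $W(L^{\widetilde q/2},L^{q/2,\infty})_t\ast W(\widetilde q',q')_t\subset W(\widetilde q,q)_t$ for $2\le\widetilde q\le\infty$, $2<q<\infty$, and reduces matters to $\|h\|_{W(L^{\widetilde q/2},L^{q/2,\infty})_t}<\infty$, which \emph{does} hold: the sequence $(|k|-1)^{-2/q}$ belongs to $\ell^{q/2,\infty}$ even though it fails to belong to $\ell^{q/2}$. (This is also why the hypothesis $q<\infty$ and the strict inequality $q>2$ enter, via the admissible range of \eqref{hls}.) With this modification your remaining steps --- feeding Lemma \ref{fix} with $\gamma=2\sigma$ into the halved indices, where the first condition in \eqref{c1} gives local integrability near $\tau=0$ and $\sigma>1$ is spent in the dyadic summation --- would go through as in the paper; without it, the reduction you propose terminates at a divergent quantity.
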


In the rest of this section, we prove Proposition \ref{prop1}.
By $t\rightarrow -t$ we may consider  $e^{it\langle D\rangle}$ only.
We can apply the standard $TT^*$ argument because of the H\"older's type inequality
\begin{equation*}
|\langle F,G \rangle_{L^2_{x,t}}|
\leq \|F\|_{W(\widetilde q, q)_t W(\widetilde r, r)_x} \|G\|_{W({\widetilde q}' , q')_t W({\widetilde r}', r')_x}
\end{equation*}
which can be proved directly from the definition of Wiener amalgam spaces.
Hence it suffices to show that
\begin{equation}\label{TT*}
\bigg\| \int_\mathbb{R} \langle D \rangle^{-2\sigma} e^{i(t-s)\langle D \rangle}  F(\cdot,s) ds \bigg\|_{W(\widetilde q , q)_t W(\widetilde r , r)_x}
\lesssim \|F\|_{W(\widetilde q' , q')_t W(\widetilde r' , r')_x}.
\end{equation}
To show this, we first write the integral kernel of the Fourier multiplier $\langle D\rangle^{-\gamma} e^{it\langle D\rangle}$ as
\begin{equation}\label{ker}
K_{\gamma}(x,t):=\frac{1}{(2\pi)^3}\int_{\mathbb{R}^3} e^{i(x\cdot\xi+t\langle\xi\rangle)}\langle\xi\rangle^{-\gamma}d\xi.
\end{equation}
Then \eqref{TT*} is rephrased as follows:
\begin{equation}\label{conv}
\bigg\| \int_{\mathbb{R}} (K_{2\sigma}(\cdot,t-s) * F(\cdot,s))(x) ds \bigg\|_{W(\widetilde q ,q)_t W(\widetilde r,r)_x} \lesssim \|F\|_{W(\widetilde q' ,q')_t W(\widetilde r',r')_x}.
\end{equation}
By Minkowski's inequality and the convolution relation \eqref{y-ineq}
it follows now that
\begin{align}\label{bfHLS}
\nonumber \bigg\| \int_{\mathbb{R}} (K_{2\sigma} (\cdot,t-s)* &F(\cdot,s))(x) ds \bigg\|_{W(\widetilde q ,q)_t W(\widetilde r ,r)_x} \\
\nonumber
&\le \bigg\| \int_{\mathbb{R}} \|K_{2\sigma}(\cdot,t-s) * F(\cdot,s)\|_{W(\widetilde r ,r)_x} ds \bigg\|_{W(\widetilde q,q)_t} \\
&\le \bigg\| \int_{\mathbb{R}} \|K_{2\sigma}(\cdot,t-s)\|_{W(\frac{\widetilde r}{2} ,\frac{r}{2})_x} \|F(\cdot,s)\|_{W(\widetilde r',r')_x} ds\bigg\|_{W(\widetilde q ,q)_t}.
\end{align}
Recall the Hardy-Littlewood-Sobolev fractional integration theorem (see e.g. \cite{St}, p. 119) in dimension $1$:
\begin{equation}\label{hls}
L^{\frac{1}{\alpha},\infty}\ast L^p\hookrightarrow L^q
\end{equation}
for $0<\alpha < 1$ and $1\le p<q<\infty$ with $\frac{1}{q} +1 = \frac{1}{p} + \alpha$.
Using \eqref{hls} with $\alpha = \frac{2}{q}$ and $p=q'$
and the usual Young's inequality, the convolution relation again gives
\begin{equation*}
W(L^{\frac{\widetilde q}2}, L^{\frac q2, \infty})_t * W(\widetilde q', q')_t \subset W(\widetilde q, q )_t
\end{equation*}
for $2\leq\widetilde{q}\leq\infty$ and $2<q<\infty$.
Hence we get
\begin{align}\label{bfHLS2}
\nonumber\bigg\| \int_{\mathbb{R}} \|K_{2\sigma}(\cdot,t-s)\|_{W(\frac{\widetilde r}{2} ,\frac{r}{2})_x} &\|F(\cdot,s)\|_{W(\widetilde r' ,r')_x} ds\bigg\|_{W(\widetilde q ,q)_t}\\
\lesssim &
\| K_{2\sigma} \|_{{W(L^{\frac{\widetilde q}{2}} ,  L^{\frac q2,\infty})_t}{W(\frac{\widetilde r}{2} ,\frac{r}{2})_x}}
\|F\|_{W(\widetilde q' ,q')_t W(\widetilde r',r')_x}.
\end{align}
Combining \eqref{bfHLS} and \eqref{bfHLS2}, we finally obtain the desired estimate \eqref{conv} if
\begin{equation}\label{t}
\| K_{2\sigma} \|_{{W(L^{\frac{\widetilde q}{2}}, L^{\frac q2,\infty})_t}{W(\frac{\widetilde r}{2},\frac{r}{2})_x}} < \infty
\end{equation}
for $(\widetilde q , \widetilde r)$ and $(q,r)$ satisfying the same conditions as in Proposition \ref{prop1}.

To show \eqref{t}, we use the following time-decay estimates for the integral kernel \eqref{ker}
which will be obtained in later sections.
\begin{lem}\label{fix}
Let $\gamma>2$. Assume that $6<r\le\widetilde{r}<\infty$
and $\widetilde r\neq6/(3-\gamma)$.
Then we have
\begin{equation}\label{tdecay}
\|K_{\gamma}(\cdot,t)\|_{W(\frac{\widetilde r}{2},\frac{r}{2})_x}
\lesssim
\begin{cases}
|t|^{-1+6/r}
\quad \text{if} \quad |t|\ge1,\\
\max\{1,|t|^{-3+\gamma+6/\widetilde{r}}\}
\quad \text{if} \quad |t|\le1.
\end{cases}
\end{equation}
\end{lem}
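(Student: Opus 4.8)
The plan is to decompose the kernel \eqref{ker} dyadically in frequency and to control each piece by pointwise bounds, from which the Wiener amalgam norm is assembled by summing over scales. Fix a Littlewood--Paley partition $1=\sum_{j\ge0}\psi_j$ with $\psi_0$ supported in $|\xi|\lesssim1$ and $\psi_j(\xi)=\psi(2^{-j}\xi)$ supported in $|\xi|\sim2^j$ for $j\ge1$, and write $K_\gamma=\sum_{j\ge0}K_\gamma^j$, where $K_\gamma^j$ is the integral \eqref{ker} with the extra cutoff $\psi_j(\xi)$. Each $K_\gamma^j$ is an oscillatory integral with phase $x\cdot\xi+t\langle\xi\rangle$ and amplitude of size $\sim2^{-\gamma j}$ on a shell of volume $\sim2^{3j}$, so I would separate the analysis into the region where this phase is non-stationary for every $\xi$ in the shell---there repeated integration by parts in $\xi$ gives rapid decay---and the region containing a stationary point $x=-t\,\xi/\langle\xi\rangle$, where genuine dispersion occurs. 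The Hessian of $\langle\xi\rangle$ has eigenvalues $\langle\xi\rangle^{-3}$ (radial) and $\langle\xi\rangle^{-1}$ (angular, double), and it is exactly this anisotropy that produces the two distinct time-decay rates appearing in \eqref{tdecay}. These are the pointwise estimates to be established in Sections \ref{sec3} and \ref{sec4}.

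Granting them, the next step is to evaluate $\|K_\gamma^j(\cdot,t)\|_{W(\frac{\widetilde r}{2},\frac{r}{2})_x}$ scale by scale. The Wiener amalgam norm first measures the local $L^{\widetilde r/2}$ size of the kernel on unit cubes and then its $L^{r/2}$ behavior across cubes, so I would use that $K_\gamma^j$ has size $\sim2^{j(3-\gamma)}$ and concentrates---near $t=0$ in a ball of radius $2^{-j}$ about the origin, and for larger $|t|$ near the stationary set. The local exponent $\widetilde r/2$ then controls, through a bound of the form $\|K_\gamma^j(\cdot,t)\|_{L^{\widetilde r/2}(Q)}\lesssim2^{j(3-\gamma-6/\widetilde r)}$ on a unit cube $Q$ meeting the origin, the integrable singularity of $K_\gamma$ at $x=0$ when $\gamma<3$; this is precisely where the constraint $\widetilde r\neq6/(3-\gamma)$ originates. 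The global exponent $r/2$, in turn, governs the spreading of the kernel across many cubes and thereby produces the power $6/r$ in the $|t|\ge1$ bound.

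Finally I would sum the dyadic pieces, treating $|t|\ge1$ and $|t|\le1$ separately. In each regime the bounds on $\|K_\gamma^j(\cdot,t)\|_{W(\frac{\widetilde r}{2},\frac{r}{2})_x}$ form a geometric series in $j$ whose ratio is a power of $2^j$; collecting the dominant scale yields $|t|^{-1+6/r}$ for $|t|\ge1$ and $\max\{1,|t|^{-3+\gamma+6/\widetilde r}\}$ for $|t|\le1$, the latter arising from the high-frequency sum $\sum_j2^{j(3-\gamma-6/\widetilde r)}$ cut off at $2^j\sim|t|^{-1}$. I expect the main obstacle to be exactly the convergence of this dyadic sum at the high-frequency end: the exponent $3-\gamma-6/\widetilde r$ degenerates to zero precisely when $\widetilde r=6/(3-\gamma)$, so the hypothesis $\widetilde r\neq6/(3-\gamma)$ is what excludes the borderline logarithmic divergence and decides which term of the maximum dominates. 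Keeping the $t$-dependence sharp while juggling the two Lebesgue exponents of the Wiener amalgam norm is the delicate point; the non-stationary (off-cone) contributions, by contrast, are summable for free thanks to their rapid decay.
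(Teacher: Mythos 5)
Your plan is correct in outline and reproduces all the right exponents, but it organizes the argument genuinely differently from the paper. Both proofs start from the same dyadic decomposition and essentially the same per-piece dispersive bounds (the paper imports them from \cite[Lemma 2.2]{BH} as \eqref{loc1}--\eqref{high1} rather than redoing stationary phase; your description of the anisotropic Hessian of $\langle\xi\rangle$, with radial eigenvalue $\langle\xi\rangle^{-3}$ and double angular eigenvalue $\langle\xi\rangle^{-1}$, is exactly what underlies those bounds). The divergence is the order of operations: the paper first sums the localized pointwise bounds over all scales to get a pointwise bound on the \emph{full} kernel (Lemma \ref{kerlem}: $\max\{1,|(x,t)|^{\gamma-3}\}$ near the origin, $|(x,t)|^{-1}$ at infinity), and only then computes the amalgam norm, via the region-by-region polar-coordinate integration of Section \ref{sec4}; you instead compute $\|K_\gamma^j(\cdot,t)\|_{W(\widetilde r/2,\,r/2)_x}$ scale by scale and sum geometric series in $j$. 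Your route makes the provenance of each exponent transparent --- the threshold $\widetilde r=6/(3-\gamma)$ from the local norm on the cube at the origin, the cutoff $2^j\sim|t|^{-1}$, the factor $|t|^{6/r}$ from the $\sim|t|^3$ unit cubes filled by the kernel --- and it avoids the paper's lengthy case analysis; the price is that the outer $L^{r/2}$ integration must be carried out for every single piece, and the pieces are not compactly concentrated: by \eqref{high1} they carry polynomial tails of size $2^{j(5/2-\gamma)}|x|^{-3/2}$, whose outer integrability uses $r>4$ and whose sum over scales is where the hypothesis on $\gamma$ is consumed.

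That last point is the one substantive omission in your sketch: you never say where $\gamma>2$ enters. For $|t|\ge1$, and likewise for the frequencies above your cutoff when $|t|\le1$, the per-piece amalgam norms come out as $\sim 2^{j(2-\gamma)}|t|^{-1+6/r}$ (resp.\ $2^{j(2-\gamma)}|t|^{-1+6/\widetilde r}$), so the scale sums are geometric precisely because $\gamma>2$; this is the hypothesis of the lemma, not a technicality. A second, minor caution: ``concentrates near the stationary set'' should be understood per piece. For the Klein-Gordon phase the stationary points $x=-t\xi/\langle\xi\rangle$ sweep out the whole ball $|x|\lesssim|t|$ as the frequency scale varies, so the large-time kernel is spread over $\sim|t|^3$ cubes rather than a thin cone; this filled-ball picture is what your cube count tacitly uses, and it is why the decay is $|t|^{-1+6/r}$ rather than a wave-type rate. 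With those two points made explicit, your norm-then-sum scheme does close, and it would constitute a legitimate alternative to Sections \ref{sec3} and \ref{sec4} of the paper.
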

We shall only explain the use of the case where  $\gamma \in (2,3)$ and $$\max\{1,|t|^{-3+\gamma+6/\widetilde{r}}\}=|t|^{-3+\gamma+6/\widetilde{r}}$$ 
when $|t|\le 1$, since the other cases can be used in the same way. 
We set $h(t)= \|K_{2\sigma}(\cdot,t)\|_{W(\frac{\widetilde r}{2} ,\frac{r}{2})_x}$
and choose $\varphi(t) \in C_0^{\infty}(\mathbb{R})$ supported on $\{t\in\mathbb{R}:|t|\le1\}$.
To calculate $\| h \|_{{W(L^{\frac{\widetilde q}2}, L^{\frac{q}{2},\infty})_t}}$ using \eqref{tdecay},
we handle $\|h \tau_k \varphi \|_{L_t^{\widetilde q/2}}$ dividing cases into $|k|\le 1$, $1\le|k|\le 2$ and $|k|\ge2$.
First we consider the case $|k|\le 1$.
By using \eqref{tdecay} with $\gamma=2\sigma$ and the support condition of $\varphi$,
\begin{equation}\label{k1}
\|h \tau_k \varphi \|_{L_t^{\tilde q/2}}^{\widetilde q/2}
\lesssim \int_{0<|t|\le1} |t|^{\frac{\widetilde q}{2}(-3+2\sigma+\frac{6}{\widetilde{r}})} dt + \int_{1\le|t|\le|k|+1} |t|^{\frac{\widetilde q}{2}(-1+\frac{6}{r})} dt.
\end{equation}
Since $\frac{\widetilde q}{2}(-3+2\sigma+\frac{6}{\widetilde{r}})+1>0$ by the first condition in \eqref{c1}, the first integral in \eqref{k1} is trivially finite. The second integral is bounded as follows:
\begin{align}\label{a}
\int_{1\le|t|\le|k|+1} |t|^{\frac{\widetilde q}{2}(-1+\frac{6}{r})} dt
&\lesssim\frac{(|k|+1)^{\frac{\widetilde q}{2}(-1+\frac{6}{r})+1} - 1 } {\frac{\widetilde q}{2}(-1+\frac{6}{r})+1}\nonumber\\
&\lesssim |k|.
\end{align}
Indeed, since $\frac{\widetilde q}{2}(-1+\frac{6}{r})<0$ from the condition $r>6$, the second inequality in \eqref{a} follows easily from the mean value theorem. Hence we get 
\begin{equation*}
\|h \tau_k \varphi \|_{L_t^{\widetilde q/2}}^{\widetilde q/2} \lesssim 1
\end{equation*}
when $|k|\le 1$.
The other cases $1\le|k|\le 2$ and $|k|\ge2$ are handled in the same way:
\begin{align*}
\|h \tau_k \varphi \|_{L_t^{\tilde q/2}}^{\widetilde q/2}
&\lesssim \int_{|k|-1\le|t|\le1} |t|^{\frac{\widetilde q}{2}(-3+2\sigma+\frac{6}{\widetilde{r}})} dt + \int_{1\le|t|\le|k|+1} |t|^{\frac{\widetilde q}{2}(-1+\frac{6}{r})} dt\nonumber\\
&\lesssim 1+|k|\nonumber\\
&\lesssim 1
\end{align*}
when $1\le|k|\le2$, and when $|k|\ge2$,
\begin{align*}
\|h \tau_k \varphi \|_{L_t^{\tilde q/2}}^{\widetilde q/2}
&\lesssim
\int_{|k|-1\le|t|\le|k|+1} |t|^{\frac{\widetilde q}{2}(-1+\frac{6}{r})}dt\nonumber\\
&\lesssim \frac{(|k|+1)^{\frac{\widetilde q}{2}(-1+\frac{6}{r})+1} - (|k|-1)^{\frac{\widetilde q}{2}(-1+\frac{6}{r})+1} } {\frac{\widetilde q}{2}(-1+\frac{6}{r})+1}\nonumber\\
&\lesssim (|k|-1)^{\frac{\widetilde q}{2}(-1+\frac{6}{r})}.
\end{align*}
Consequently, we get
\begin{equation} \label{local_t}
\|h\tau_k\varphi\|_{L_t^{\widetilde q/2}} \lesssim
\begin{cases}
1 \quad\textit{if}\quad |k| \leq 2,\\
(|k|-1)^{-(1-\frac{6}{r})} \quad\textit{if}\quad |k| \geq 2.
\end{cases}
\end{equation}
By \eqref{local_t}, $\|h\tau_k\varphi\|_{L_t^{\widetilde q/2}}$ belongs to $L^{\frac{q}{2}, \infty}_k$ since we are assuming the second condition in \eqref{c1} which is equivalent to $\frac{2}{q}=1-\frac{6}{r}$. This finally implies
\begin{equation*}
\| h \|_{{W(L^{\frac{\widetilde q}{2}}, L^{\frac{q}{2},\infty})_t}}<\infty
\end{equation*}
for $(\widetilde q , \widetilde r)$ and $(q,r)$ satisfying the same conditions as in Proposition \ref{prop1}
except for the case $\widetilde r=\frac{6}{3-\gamma}$.
But this case can be shown by just interpolating between the cases $\widetilde{r}=\frac{6}{3-\gamma}-\varepsilon$ and $\widetilde{r}=\frac{6}{3-\gamma}+\varepsilon$, 
with a sufficiently small $\varepsilon>0$.

\section{Pointwise estimates} \label{sec3}
In this section we obtain the following pointwise estimates for the integral kernel \eqref{ker}, which will be used for the proof of the time-decay estimates (Lemma \ref{fix}) in the next section:

\begin{lem}\label{kerlem}
Let $\gamma>2$. Then
\begin{equation}\label{xt}
|K_\gamma(x,t)|\lesssim
\begin{cases}
\max\big\{1,|(x,t)|^{\gamma-3}\big\} \quad \text{if} \quad |(x,t)|\lesssim 1,\\
|(x,t)|^{-1} \quad \text{if} \quad |(x,t)|\gtrsim 1.
\end{cases}
\end{equation}
\end{lem}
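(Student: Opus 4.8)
The plan is to estimate the oscillatory integral
\begin{equation*}
K_\gamma(x,t)=\frac{1}{(2\pi)^3}\int_{\mathbb{R}^3}e^{i(x\cdot\xi+t\langle\xi\rangle)}\langle\xi\rangle^{-\gamma}\,d\xi
\end{equation*}
by first reducing it, via the rotational symmetry of the phase in $\xi$ about the $x$-direction, to a one-dimensional integral. Writing $\xi$ in spherical coordinates with polar axis along $x$, the angular integration can be carried out explicitly, leaving an integral over the radial variable $\rho=|\xi|$ whose amplitude is $\rho^2\langle\rho\rangle^{-\gamma}$ times a Bessel-type factor coming from the sphere, and whose phase is $t\langle\rho\rangle \pm |x|\rho$. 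In this way the estimate splits naturally according to the size of $|(x,t)|$, and the two regimes in \eqref{xt} correspond to the behavior of this radial oscillatory integral at small versus large argument.

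For the region $|(x,t)|\lesssim1$ I would argue more crudely. Since the domain is bounded, the phase has no large parameter to exploit, so one simply bounds $|K_\gamma(x,t)|$ by splitting the $\xi$-integral into $|\xi|\lesssim1$ and $|\xi|\gtrsim1$. On the low-frequency piece the integrand is bounded and the integral is $O(1)$. On the high-frequency piece, where $\langle\xi\rangle\sim|\xi|$, the amplitude decays like $|\xi|^{-\gamma}$; integrating in three dimensions gives convergence precisely because $\gamma>2$ controls the decay, and a scaling/counting argument produces the factor $|(x,t)|^{\gamma-3}$ when $\gamma<3$ (with the $O(1)$ bound dominating when $\gamma\ge3$), yielding the stated maximum. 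This is the routine part.

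The substantive work is the region $|(x,t)|\gtrsim1$, where one must extract genuine oscillatory decay and prove the sharp bound $|(x,t)|^{-1}$. Here I would use a dyadic (Littlewood-Paley) decomposition $\langle\xi\rangle^{-\gamma}=\sum_j \psi_j(\xi)$ into frequency shells $|\xi|\sim 2^j$, estimate each piece by stationary phase or repeated integration by parts, and sum in $j$. The gradient of the phase $x\cdot\xi+t\langle\xi\rangle$ is $x+t\xi/\langle\xi\rangle$, and its vanishing (the stationary point) occurs only when $|x|<|t|$ at $\xi$ aligned with $-x$; away from stationarity one integrates by parts, while near it one invokes the stationary phase expansion, using the nondegeneracy of the Hessian of $\langle\xi\rangle$. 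The decay $\gamma>2$ guarantees the dyadic sum converges, and the competition between the $|t|^{-1}$ gain from stationary phase in the time-like region and the rapid decay in the space-like region $|x|>|t|$ must be organized so that in all cases the bound collapses to $|(x,t)|^{-1}$.

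The main obstacle will be this last step: controlling the stationary-phase contribution uniformly across all dyadic scales and both the time-like and space-like regimes, and in particular verifying that the borderline decay rate is exactly $|(x,t)|^{-1}$ rather than something weaker. One must track carefully how the amplitude weight $\langle\xi\rangle^{-\gamma}$ interacts with the $2^{j}$-dependent Jacobian and curvature factors from stationary phase at scale $2^j$, ensuring the geometric series in $j$ sums without loss and that the worst case—frequencies where $t\langle\xi\rangle$ and $|x|\,|\xi|$ are comparable—does not degrade the rate. I expect this bookkeeping, rather than any single hard inequality, to be where the real care is required.
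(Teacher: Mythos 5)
Your plan for the region $|(x,t)|\gtrsim 1$ is sound and matches the paper's skeleton: dyadic decomposition $K_\gamma=\sum_k K_\gamma^k$ plus pointwise bounds at each scale plus summation. The difference is that the paper does not redo any stationary-phase analysis; it simply cites Lemma 2.2 of Bejenaru--Herr \cite{BH} for the localized bounds
\begin{equation*}
|K_\gamma^k(x,t)| \lesssim 2^{3k}(1+2^{2k}|(x,t)|)^{-\frac32} \ \ (k\lesssim 1), \qquad
|K_\gamma^k(x,t)| \lesssim 2^{k(3-\gamma)}(1+2^k|(x,t)|)^{-1}\min\{1, 2^{k}(1+2^{k}|(x,t)|)^{-\frac12}\} \ \ (k\gtrsim 1),
\end{equation*}
and the entire proof is the summation in $k$. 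Your proposal to re-derive such bounds by stationary phase is legitimate but much more work, and your outline would need to distinguish the low-frequency regime (where $\langle\xi\rangle$ is parabolic and the decay scales like $2^{2k}|(x,t)|$) from the high-frequency regime (where it is conic), which is exactly why the two displayed bounds scale differently.

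The genuine gap is in the region $|(x,t)|\lesssim 1$, which you dismiss as ``the routine part.'' You propose to bound the integral by absolute values, asserting that ``the phase has no large parameter to exploit'' and that the high-frequency piece converges ``precisely because $\gamma>2$.'' This is false: in three dimensions $\int_{|\xi|\gtrsim 1}|\xi|^{-\gamma}\,d\xi$ converges absolutely only for $\gamma>3$, and the relevant range here is $2<\gamma\le 3$ (the paper applies the lemma with $\gamma=2\sigma$, $\sigma>1$, and its Section 4 is devoted to $2<\gamma<3$). The blow-up factor $|(x,t)|^{\gamma-3}$ in the statement is precisely the signature of this failure of absolute convergence: it arises because for frequencies $|\xi|\gtrsim|(x,t)|^{-1}$ there \emph{is} a large parameter --- the phase oscillates rapidly across the frequency shell --- and one must use the oscillatory (dispersive) bounds there too. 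In the paper's summation, the scales $2^k\lesssim|(x,t)|^{-1}$ contribute $\sum 2^{k(3-\gamma)}\sim|(x,t)|^{\gamma-3}$, while the scales $2^k\gtrsim|(x,t)|^{-1}$ contribute $\sum 2^{k(2-\gamma)}|(x,t)|^{-1}\sim|(x,t)|^{\gamma-3}$, where the factor $2^{k(2-\gamma)}|(x,t)|^{-1}$ comes from the dispersive factor $(1+2^k|(x,t)|)^{-1}$ and the sum is geometrically convergent exactly because $\gamma>2$. That is where the hypothesis $\gamma>2$ enters --- not through absolute convergence. As written, your argument for the small region cannot produce the stated bound; you must run the same dyadic oscillatory argument there as well.
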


\begin{proof}
Let $\rho\in C_0^{\infty}(-2,2)$ be a compactly supported smooth function such that $\rho(s)=1$ for $|s|\le1$. For $k\in\mathbb{Z}$ we set
\begin{equation*}
\chi_k(\xi) := \rho(2^{-(k+1)}|\xi|)-\rho(2^{-(k-2)}|\xi|)
\end{equation*}
whose support is $\{\xi\in\mathbb{R}^{n} : 2^{k-2}< |\xi| \le 2^{k+2}\}$,
and consider the localized kernel $K_\gamma^{k}$ to frequencies of size $2^k$ as
\begin{equation*}
K_\gamma^{k}(x,t) = \int_{\mathbb{R}^{3}} e^{ix\cdot\xi} e^{it\langle \xi \rangle}\langle \xi \rangle^{-\gamma} \chi_k(\xi) d\xi
\end{equation*}
where $\gamma\geq0$.
Then we have
\begin{equation}\label{loc1}
|K_\gamma^k(x,t)| \lesssim 2^{3k}(1+2^{2k}|(x,t)|)^{-\frac{3}{2}}
\end{equation}
for $k\lesssim 1$, and for $k\gtrsim 1$
\begin{equation}\label{high1}
|K_\gamma^k(x,t)| \lesssim 2^{k(3-\gamma)}(1+2^k|(x,t)|)^{-1} \min\{1, 2^{k}(1+2^{k}|(x,t)|)^{-\frac{1}{2}}\}.
\end{equation}
This can be immediately derived from Lemma 2.2 in \cite{BH}
which is the case $\gamma=0$.

Summing these localized estimates over $k$ yields now \eqref{kerlem}. 
Indeed, from \eqref{loc1}, one can see that
\begin{equation*}
\sum_{k<1} |K_\gamma^k(x,t)| 
\lesssim
\sum_{k<1} 2^{3k} 
\lesssim1
\end{equation*}
when $|(x,t)|\lesssim 1$, 
and when $|(x,t)|\gtrsim 1$
\begin{align*}
\sum_{k<1} |K_\gamma^k(x,t)|
&\lesssim
\sum_{\substack{k<1\\2^{2k}|(x,t)|\lesssim 1}} 2^{3k} 
+ \sum_{\substack{k<1\\2^{2k}|(x,t)| \gtrsim 1}} |(x,t)|^{-\frac{3}{2}}\\
&\lesssim 
|(x,t)|^{-\frac{3}{2}}+|(x,t)|^{-\frac{3}{2}}\log_{2}{|(x,t)|^{\frac{1}{2}}}\\
&\lesssim
|(x,t)|^{-1}.
\end{align*}
In summary, 
\begin{equation}\label{less1sum}
\sum_{k<1} |K_\gamma^k(x,t)|
\lesssim
\begin{cases}
1 \quad \text{if} \quad |(x,t)|\lesssim 1,\\
|(x,t)|^{-1} \quad \text{if} \quad |(x,t)|\gtrsim 1.
\end{cases}
\end{equation}
On the other hand, from \eqref{high1},
one can see that 
\begin{align*}
\sum_{k\ge1} |K_\gamma^k(x,t)|
&\lesssim
\sum_{\substack{k\ge1 \\ 2^k|(x,t)|\lesssim 1}} 
2^{k(3-\gamma)} + \sum_{\substack{k\ge1 \\ 2^k|(x,t)|\gtrsim 1}} 
2^{k(2-\gamma)}|(x,t)|^{-1}\\
&\lesssim
|(x,t)|^{\gamma-3} \quad \text{if} \quad \gamma>2
\end{align*}
when $|(x,t)|\lesssim 1$,
and when $|(x,t)|\gtrsim 1$
\begin{align*}
\sum_{k\ge1} |K_\gamma^k(x,t)|
&\lesssim
\sum_{\substack{k\ge1 \\ |(x,t)|\gtrsim 2^k}} 
2^{k(\frac{5}{2}-\gamma)}|(x,t)|^{-\frac{3}{2}} + \sum_{\substack{k\ge1\\|(x,t)|\lesssim 2^k}} 
2^{k(2-\gamma)}|(x,t)|^{-1} \\
&\lesssim
|(x,t)|^{1-\gamma} \quad \text{if} \quad \gamma>2.
\end{align*}
In summary, for $\gamma>2$,
\begin{equation}\label{gtr1sum}
\sum_{k\ge 1} |K_\gamma^k(x,t)|
\lesssim
\begin{cases}
|(x,t)|^{\gamma-3} \quad \text{if} \quad |(x,t)|\lesssim 1,\\
|(x,t)|^{1-\gamma} \quad \text{if} \quad |(x,t)|\gtrsim 1.
\end{cases}
\end{equation}
Finally, \eqref{xt} follows from combining \eqref{less1sum} and \eqref{gtr1sum}.
\end{proof}

\section{Time-decay estimates}\label{sec4}

We now finish the proof of Lemma \ref{fix} by making use of the pointwise estimates just obtained in the previous section.

We first choose $\varphi(x) \in C_0^{\infty}(\mathbb{R}^3)$ supported on $\{x\in\mathbb{R}^3:|x|\le1\}$
to calculate
\begin{equation}\label{fixedes1}
\|K_{\gamma}(\cdot,t)\|_{W(\frac{\widetilde r}{2},\frac{r}{2})_x}^{r/2}
=\int_{\mathbb{R}^3}  \|K_{\gamma}(\cdot,t) \tau_y \varphi(\cdot)\|_{L_x^{\widetilde r/2}}^{\frac{r}{2}} dy,
\end{equation}
and set
\begin{equation*}
I_t(|y|):=\int_{|y|-1 \le |x| \le |y|+1} |K_{\gamma}(x,t)|^{{\widetilde r}/2} dx.
\end{equation*}
By the size of $\textrm{supp}\,\varphi$, $\|K_{\gamma}(\cdot,t) \tau_y  \varphi(\cdot) \|_{L_x^{\widetilde r/2}}^{\widetilde r/2}$
is then calculated as
\begin{equation*}
\int_{|x-y| \le 1} |K_{\gamma}(x,t)|^{{\widetilde r}/2} dx
\lesssim
\begin{cases}
I_t(|y|)\quad\text{if} \quad |y|\leq1,\\
|y|^{-2}I_t(|y|)\quad\text{if} \quad |y|\ge1
\end{cases}
\end{equation*}
because the region $\{x\in\mathbb{R}^3:|y|-1 \le |x| \le |y|+1\}$ when $|y| \ge 1$
contains balls with radius $1$ as many as a constant multiple of $|y|^{2}$.
Finally, we calculate \eqref{fixedes1} as
\begin{align}\label{secint1}
\nonumber\|K_{\gamma}(\cdot,t)\|_{W(\frac{\widetilde r}{2},\frac{r}{2})_x}^{r/2}
&\lesssim\int_{|y| \le 1} I_t(|y|)^{\frac{r}{\widetilde{r}}}dy
+\int_{|y|\ge1}   \big(|y|^{-2}I_t(|y|)\big)^{\frac{r}{\widetilde{r}}}dy\\
&:=A+B.
\end{align}

We only consider the case $2<\gamma<3$.
The case $\gamma\ge3$ is more simply handled in the same way as well
and so we shall omit the details.  
Comparing the size of $|t|$, $|x|$ and $1$, we begin by deducing from \eqref{xt} that
\begin{equation}\label{upbd1}
|K_{\gamma}(x,t)|\lesssim
\begin{cases}
|t|^{-1} \quad \text{if} \quad \max\{|t|,|x|,1\}=|t|,\\
\min\{|x|^{-3+\gamma},|t|^{-3+\gamma}\} \quad \text{if} \quad \max\{|t|,|x|,1\}=1,\\
|x|^{-1} \quad \text{if} \quad \max\{|t|,|x|,1\}=|x|.
\end{cases}
\end{equation}
By considering both the size of $\textrm{supp}\,\varphi$ and the different behaviors of \eqref{upbd1} near $|x|=|t|$ as well as $|x|=1$,
we now estimate $I_t(|y|)$ dividing cases into:
\begin{itemize}
\item when $|t|\le1$,
\begin{equation*}
|y|\le|t|+1, \quad |t|+1\le|y|\le2, \quad |y|\ge2,
\end{equation*}
\item when $1\le|t|\le3$,
\begin{equation*}
|y|\le|t|-1, \quad |t|-1\le|y|\le2, \quad 2\le|y|\le|t|+1, \quad |y|\ge|t|+1,
\end{equation*}
\item when $|t|\ge3$,
\begin{equation*}
|y|\le2, \quad 2\le|y|\le|t|-1, \quad |t|-1\le|y|\le|t|+1, \quad |y|\ge|t|+1.
\end{equation*}
\end{itemize}

\subsection{The case $|t|\le1$}
When $|y|\le|t|+1$ firstly, we split the integral region of $I_{t}(|y|)$ into 
\begin{equation*}
|y|-1 \le |x| \le |t|,\quad |t|\le|x|\le 1 \quad\text{and}\quad 1 \le |x| \le |y|+1,
\end{equation*}
and apply the pointwise estimates \eqref{upbd1} appropriately in each region.
Finally, we carefully calculate the resulting integrals after using the polar coordinates;
\begin{itemize}
\item when $|y| \le 1$;
\begin{align} \label{a1}
I_t(|y|)
\lesssim&\;
\int_{0}^{|t|} |t|^{\frac{\widetilde{r}}{2}(-3+\gamma)} \rho^{2} d\rho 
+\int_{|t|}^{1} \rho^{\frac{\widetilde{r}}{2}(-3+\gamma)+2} d\rho
+\int_{1}^{|y|+1}\rho^{-\frac{\widetilde{r}}{2}+2}d\rho\nonumber\\
\lesssim&\;
\begin{cases}
|t|^{\frac{\widetilde{r}}{2}(-3+\gamma)+3} \quad \text{if} \quad \widetilde{r}>\frac{6}{3-\gamma},\\
1 \quad \text{if} \quad \widetilde{r}<\frac{6}{3-\gamma},
\end{cases}
\end{align}
\item when $|y|\ge 1$;
\begin{align}\label{a11}
I_t(|y|)
&\lesssim
\int_{|y|-1}^{|t|}|t|^{\frac{\widetilde{r}}{2}(-3+\gamma)}\rho^2 d\rho
+\int_{|t|}^{1} \rho^{\frac{\widetilde{r}}{2}(-3+\gamma)+2} d\rho
+\int_{1}^{|y|+1}\rho^{-\frac{\widetilde{r}}{2}+2}d\rho\nonumber\\
&\lesssim
\begin{cases}
|t|^{\frac{\widetilde r}{2}(-3+\gamma)+3} \quad \text{if} \quad \widetilde{r}>\frac{6}{3-\gamma},\\
1 \quad \text{if} \quad \widetilde{r}<\frac{6}{3-\gamma}.
\end{cases}
\end{align}
\end{itemize}

Next, when $|t|+1\le|y|\le2$ and $|y|\ge2$, we similarly and more simply obtain 
\begin{align} \label{b1}
I_t(|y|)&\lesssim \int_{|y|-1}^{1} \rho^{\frac{\widetilde{r}}{2}(-3+\gamma)+2} d\rho+\int_{1}^{|y|+1} \rho^{-\frac{\widetilde{r}}2+2}d\rho\nonumber\\
&\lesssim
\begin{cases}
(|y|-1)^{\frac{\widetilde r}{2}(-3+\gamma)+3} \quad \text{if} \quad \widetilde{r}>\frac{6}{3-\gamma},\\
1 \quad \text{if} \quad \widetilde{r}<\frac{6}{3-\gamma},
\end{cases}
\end{align}
and
\begin{equation}\label{c01}
I_t(|y|)
\lesssim
\int_{|y|-1}^{|y|+1} \rho^{-\frac{\widetilde r}{2}+2}d\rho
\lesssim
(|y|-1)^{-\frac{\widetilde{r}}{2}+2}\quad\text{since}\quad \widetilde{r}>4,
\end{equation}
respectively.

\subsection{The case $1\le|t|\le3$}
Similarly in the same way, 
if $|y|\le|t|-1$
\begin{itemize}
\item when $|y| \le 1$;
\begin{align}\label{d1}
I_t(|y|)&\lesssim |t|^{-\frac{\widetilde r}{2}} \int_{0}^{1} \rho^{2} d\rho+|t|^{-\frac{\widetilde{r}}{2}}\int_{1}^{|y|+1} \rho^{2}d\rho\nonumber\\
&\lesssim
|t|^{-\frac{\widetilde r}{2}}(|y|+1)^{3},
\end{align}
\item when $|y| \ge 1$;
\begin{align}\label{d11}
I_t(|y|)&\lesssim |t|^{-\frac{\widetilde r}{2}} \int_{|y|-1}^{1} \rho^{2} d\rho+|t|^{-\frac{\widetilde{r}}{2}}\int_{1}^{|y|+1} \rho^{2}d\rho\nonumber\\
&\lesssim
|t|^{-\frac{\widetilde r}{2}}(|y|+1)^{3},
\end{align}
\end{itemize}
and if $|t|-1\le|y|\le2$
\begin{itemize}
\item when $|y| \le 1$;
\begin{align}\label{e1}
I_t(|y|)
&\lesssim
|t|^{-\frac{\widetilde{r}}{2}}\int_{0}^{1} \rho^{2}d\rho
+|t|^{-\frac{\widetilde{r}}{2}}\int_{1}^{|t|} \rho^{2}d\rho
+\int_{|t|}^{|y|+1} \rho^{-\frac{\widetilde r}{2}+2}d\rho\nonumber\\
&\lesssim
|t|^{-\frac{\widetilde{r}}{2}+3},
\end{align}
\item when $|y| \ge 1$;
\begin{align}\label{e11}
I_t(|y|)
&\lesssim
|t|^{-\frac{\widetilde{r}}{2}}\int_{|y|-1}^{1} \rho^{2}d\rho
+|t|^{-\frac{\widetilde{r}}{2}}\int_{1}^{|t|} \rho^{2}d\rho
+\int_{|t|}^{|y|+1} \rho^{-\frac{\widetilde r}{2}+2}d\rho\nonumber\\
&\lesssim
|t|^{-\frac{\widetilde{r}}{2}+3},
\end{align}
\end{itemize}
and if $2\le|y|\le|t|+1$
\begin{align} \label{f1}
I_t(|y|)&\lesssim |t|^{-\frac{\widetilde{r}}{2}}\int_{|y|-1}^{|t|} \rho^{2}d\rho
+\int_{|t|}^{|y|+1} \rho^{-\frac{\widetilde{r}}{2}+2}d\rho\nonumber\\
&\lesssim
|t|^{-\frac{\widetilde{r}}{2}+3},
\end{align}
and finally if $|y|\ge|t|+1$
\begin{equation}\label{g1}
I_t(|y|)
\lesssim
 \int_{|y|-1}^{|y|+1} \rho^{-\frac{\widetilde r}{2}+2}d\rho
\lesssim
(|y|-1)^{-\frac{\widetilde{r}}{2}+2}.
\end{equation}

\subsection{The case $|t|\ge3$} 
Similarly, if $|y|\le2$
\begin{itemize}
\item when $|y| \le 1$;
\begin{align}\label{h1}
I_t(|y|)&\lesssim |t|^{-\frac{\widetilde r}{2}} \int_{0}^{1} \rho^{2} d\rho+|t|^{-\frac{\widetilde{r}}{2}}\int_{1}^{|y|+1} \rho^{2}d\rho\nonumber\\
&\lesssim
|t|^{-\frac{\widetilde r}{2}}(|y|+1)^{3},
\end{align}
\item when $|y| \ge 1$;
\begin{align}\label{h11}
I_t(|y|)&\lesssim |t|^{-\frac{\widetilde r}{2}} \int_{|y|-1}^{1} \rho^{2} d\rho+|t|^{-\frac{\widetilde{r}}{2}}\int_{1}^{|y|+1} \rho^{2}d\rho\nonumber\\
&\lesssim
|t|^{-\frac{\widetilde r}{2}}(|y|+1)^{3},
\end{align}
\end{itemize}
and if $2\le|y|\le|t|-1$
\begin{equation}\label{i1}
I_t(|y|)\lesssim |t|^{-\frac{\widetilde{r}}{2}}\int_{|y|-1}^{|y|+1} \rho^{2}d\rho
\lesssim |t|^{-\frac{\widetilde{r}}{2}}(|y|+1)^{3},
\end{equation}
and if $|t|-1\le|y|\le|t|+1$
\begin{align} \label{j1}
I_t(|y|)&\lesssim
|t|^{-\frac{\widetilde{r}}{2}}\int_{|y|-1}^{|t|} \rho^{2}d\rho
+\int_{|t|}^{|y|+1} \rho^{-\frac{\widetilde{r}}{2}+2}d\rho\nonumber\\
&\lesssim
|t|^{-\frac{\widetilde{r}}{2}+3},
\end{align}
and finally if $|y|\ge|t|+1$
\begin{equation}\label{k01}
I_t(|y|)
\lesssim
 \int_{|y|-1}^{|y|+1} \rho^{-\frac{\widetilde r}{2}+2}d\rho
\lesssim
(|y|-1)^{-\frac{\widetilde{r}}{2}+2}.
\end{equation}

\subsection{Fitting it all together}
We now estimate the first part $A$ in \eqref{secint1} using some of bounds obtained above, \eqref{a1}, \eqref{d1}, \eqref{e1} and \eqref{h1}, as follows:

\begin{itemize}
\item when $|t|\le1$;
\begin{equation*}
A
\lesssim
\int_{|y| \le 1} \eqref{a1}^{\frac{r}{\widetilde{r}}}dy
\lesssim
\begin{cases}
|t|^{\frac{r}{2}(-3+\gamma)+\frac{3r}{\widetilde{r}}} \quad \text{if} \quad \widetilde{r}>\frac{6}{3-\gamma},\\
1 \quad \text{if} \quad \widetilde{r}<\frac{6}{3-\gamma},
\end{cases}
\end{equation*}

\item when $1\le|t|\le2$;
\begin{equation*}
A
\lesssim
\int_{|y|\le|t|-1}  \eqref{d1}^{\frac{r}{\widetilde{r}}}dy
+\int_{|t|-1\le |y| \le 1} \eqref{e1}^{\frac{r}{\widetilde{r}}}dy
\lesssim
|t|^{-\frac{r}{2}+\frac{3r}{\widetilde r}}
\sim
|t|^{-\frac{r}{2}},
\end{equation*}

\item when $2\le|t|\le3$;
\begin{equation*}
A
\lesssim
\int_{|y|\le1} \eqref{d1}^{\frac{r}{\widetilde{r}}}dy
\lesssim
|t|^{-\frac{r}{2}},
\end{equation*}

\item when $|t|\ge3$;
\begin{equation*}
A
\lesssim
\int_{|y| \le 1} \eqref{h1}^{\frac{r}{\widetilde{r}}}dy
\lesssim
|t|^{-\frac{r}{2}}.
\end{equation*}

\end{itemize} 

Next we estimate the second part $B$ in \eqref{secint1}.
With the notation $d\mu=|y|^{-\frac{2r}{\widetilde{r}}}dy$, one can see that
\begin{itemize}
\item when $|t|\le1$;
\begin{align*}
B\lesssim&\,
\int_{1\le|y|\le|t|+1} \eqref{a11}^{\frac{r}{\widetilde r}}d\mu
+\int_{|t|+1\le|y|\le2}\eqref{b1}^{\frac{r}{\widetilde r}}d\mu
+\int_{|y|\ge2}\eqref{c01}^{\frac{r}{\widetilde r}}d\mu\\
\lesssim&\,
\begin{cases}
|t|^{\frac{r}{2}(-3+\gamma)+\frac{3r}{\widetilde{r}}} \quad \text{if} \quad \widetilde{r}>\frac{6}{3-\gamma},\\
1 \quad \text{if} \quad \widetilde{r}<\frac{6}{3-\gamma},
\end{cases}
\end{align*}

\item when $1\le|t|\le2$;
\begin{align*}
B\lesssim&
\,\int_{1\le|y| \le 2} \eqref{e11}^{\frac{r}{\widetilde{r}}}d\mu
+\int_{2\le|y| \le |t|+1} \eqref{f1}^{\frac{r}{\widetilde{r}}}d\mu
+\int_{|y| \ge |t|+1} \eqref{g1}^{\frac{r}{\widetilde{r}}}d\mu\\
\lesssim&\, 
|t|^{-\frac{r}{2}+3},
\end{align*}

\item when $2\le|t|\le3$;
\begin{align*}
B\lesssim&
\,\int_{1\le|y| \le |t|-1} \eqref{d11}^{\frac{r}{\widetilde{r}}}d\mu
+\int_{|t|-1\le|y| \le 2} \eqref{e11}^{\frac{r}{\widetilde{r}}}d\mu\\
&+\int_{2 \le |y| \le |t|+1} \eqref{f1}^{\frac{r}{\widetilde{r}}}d\mu
+\int_{|y| \ge |t|+1} \eqref{g1}^{\frac{r}{\widetilde{r}}}d\mu\\
\lesssim&\, 
|t|^{-\frac{r}{2}+3},
\end{align*}

\item when $|t|\ge3$;
\begin{align*}
B\lesssim&
\,\int_{1\le|y| \le 2} \eqref{h11}^{\frac{r}{\widetilde{r}}}d\mu
+\int_{2\le|y| \le |t|-1} \eqref{i1}^{\frac{r}{\widetilde{r}}}d\mu\\
&+\int_{|t|-1 \le |y| \le |t|+1} \eqref{j1}^{\frac{r}{\widetilde{r}}}d\mu
+\int_{|y| \ge |t|+1} \eqref{k01}^{\frac{r}{\widetilde{r}}}d\mu\\
\lesssim&\, 
|t|^{-\frac{r}{2}+3}.
\end{align*}
\end{itemize}
Each last integral when estimating $B$ above boils down to 
\begin{align*}
\int_{|y| \ge a} (|y|-1)^{(-\frac{\widetilde{r}}{2}+2){\frac{r}{\widetilde{r}}}}d\mu
&=
\int_{a-1}^{\infty}\rho^{-\frac{r}{2}+\frac{2r}{\widetilde{r}}}(\rho+1)^{-\frac{2r}{\widetilde{r}}+2}d\rho\\
&\lesssim
\int_{a-1}^{\infty}\rho^{-\frac{r}{2}+2}d\rho\\
&\lesssim
(a-1)^{-\frac{r}{2}+3},
\end{align*}
with $a=2$ if $|t|\leq1$ and $a=|t|+1$ otherwise,  
provided $\widetilde{r} \ge r > 6$.

Combining \eqref{secint1} with these bounds for $A$ and $B$, we have
\begin{equation*}
\|K_{\gamma}(\cdot,t)\|_{W(\frac{\widetilde r}{2},\frac{r}{2})_x}^{r/2}
\lesssim
\begin{cases}
|t|^{-\frac{r}{2}+3} \quad \text{if} \quad |t|\ge1, \\
\max\{1,|t|^{\frac{r}{2}(-3+\gamma)+\frac{3r}{\widetilde{r}}}\} \quad \text{if} \quad |t|\le1
\end{cases}
\end{equation*}
under the assumptions $6<r\le\widetilde r<\infty$ and $\widetilde r\neq\frac{6}{3-\gamma}$.

\subsubsection*{Acknowledgement}
The authors would like to thank the anonymous referees for their
valuable comments which helped to improve the manuscript.

\end{document}